\newcommand\R{{\mathbf{R}}}
\renewcommand\P{{\mathbf{P}}}
\newcommand\E{{\mathbf{E}}}
\newcommand\Z{{\mathbf{Z}}}
\newcommand\I{{\mathbf{I}}}
\newcommand\CL{{\mathcal L}}
\theoremstyle{plain}
  \newtheorem{theorem}[subsection]{Theorem}
  \newtheorem{conjecture}[subsection]{Conjecture}
  \newtheorem{corollary}[subsection]{Corollary}
\theoremstyle{remark}
\theoremstyle{definition}
\begin{document}
\title[]{Random walks with different directions: Drunkards beware !}

\author{Sim\~{a}o Herdade }
\address {Department of Mathematics, Rutgers Univ., Piscataway 08540}
\email{simaoh@math.rutgers.edu}
\thanks{ Simao Herdade was supported by Fundacao para a ciencia e tecnologia, grant award 65678/2009, financed by POPH - QREN}

\author{Van Vu}
\address{Department of Mathematics, Yale University, New Haven 06520}
\email{van.vu@yale.edu}
\thanks{V. Vu is supported by   NSF  grant DMS 1307797  and AFORS grant FA9550-12-1-0083.}

\subjclass{11B25}
\begin{abstract} As an extension of Polya's classical result on 
random walks on the square grids ($\Z^d$), we consider a random walk where the steps, while still have unit length,  point to different directions. 

We  show that in  dimensions at least 4, the returning probability after $n$ steps is at most $n^{-d/2 - d/(d-2) +o(1) }$, which is sharp. 
The real surprise is in dimensions 2 and 3. 

In dimension 2, where the  traditional grid walk is  recurrent, our upper bound is $n^{-\omega (1) }$, which is much worse
than higher dimensions.  

In dimension 3, we prove an upper bound of order $n^{-4 +o(1) }$. We discover 
 a new conjecture concerning incidences between spheres and points in $\R^3$, which, if holds, would improve the bound  to $n^{-9/2 +o(1) }$, which is consistent 
to the $d \ge 4$ case.  This conjecture  resembles Szemer\'edi-Trotter type results and is of independent interest. 

\end{abstract}
\maketitle

\section{Introduction}

In his classical paper in 1921, Polya \cite{Polya} proved his famous
theorem on random walks on $\Z^d$. Consider a drunkard  at the origin (his
home) at time 0; at each tick of the clock he goes to a randomly selected neighbouring
lattice point,  uniformly at random. One is interested in the chance that
 he
 returns to home  at time $n$. Let $e_1, \dots, e_d$ be the basis unit
vectors in $\Z^d$ and $\xi_i$ are iid Bernoulli random variables (taking
values $\pm 1$ with probability $1/2$).
 Let

 $$S_n :=  \sum_{j=1} ^n \xi_j f_j  $$ where $f_j $ is chosen uniformly
from $E := \{e_1, \dots, e_d \}$.  One is  interested in  estimating

 \begin{equation} \P( S_n = 0) . \end{equation}
 
  Polya proved

 \begin{theorem}  \label{Polya}
 For any $d \ge 1$, $\P (S_n =0)  = \Theta ( n^{-d/2} ). $

 \end{theorem}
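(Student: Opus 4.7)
The natural route is Fourier inversion on the lattice. By the standard formula,
\[
  \P(S_n=0)=\frac1{(2\pi)^d}\int_{[-\pi,\pi]^d}\phi(\theta)^n\,d\theta,
\]
where the characteristic function of a single step $\xi_j f_j$ is
\[
  \phi(\theta)=\E\bigl[e^{i\theta\cdot \xi_j f_j}\bigr]=\frac1d\sum_{k=1}^d\cos\theta_k.
\]
I would start by recording the Taylor expansion
\[
  \phi(\theta)=1-\frac{|\theta|^2}{2d}+O(|\theta|^4)
\]
near the origin, and, symmetrically,
\[
  \phi(\theta)=-1+\frac{|\theta-\pi\mathbf{1}|^2}{2d}+O(|\theta-\pi\mathbf 1|^4)
\]
near $\pi\mathbf 1=(\pi,\dots,\pi)$. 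A parity observation is useful: $\mathbf 1\cdot S_n\equiv n\pmod 2$, so return is possible only for even $n$; I would write $n=2m$ from this point on.

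Next I would split the integration domain into three pieces: a ball $B_0=\{|\theta|\le \delta\}$ around $0$, a ball $B_\pi=\{|\theta-\pi\mathbf 1|\le \delta\}$ around $\pi\mathbf 1$, and the complement $R$. The main step is a uniform bound $|\phi(\theta)|\le 1-c(\delta)$ on $R$: since $|\cos\theta_k|\le 1$ with equality only at $\theta_k\in\{0,\pi\}$, equality $|\phi(\theta)|=1$ forces every $\theta_k$ to be $0$ or $\pi$ simultaneously, i.e.\ $\theta\in\{0,\pi\mathbf 1\}$ modulo $2\pi$, and continuity/compactness then gives the strict gap. This bound yields
\[
  \Bigl|\int_R \phi(\theta)^n\,d\theta\Bigr|\le (2\pi)^d(1-c)^n=O(e^{-cn}),
\]
which is negligible. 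On $B_0$, the inequality $\phi(\theta)\le \exp\!\bigl(-|\theta|^2/(2d)+O(|\theta|^4)\bigr)$ (valid because $\phi>0$ for $\delta$ small) together with the substitution $u=\sqrt{n}\,\theta$ produces
\[
  \int_{B_0}\phi(\theta)^n\,d\theta=\frac{1}{n^{d/2}}\int_{|u|\le \delta\sqrt n}e^{-|u|^2/(2d)+O(|u|^4/n)}\,du
  =(1+o(1))\,\frac{(2\pi d)^{d/2}}{n^{d/2}}.
\]
A symmetric computation on $B_\pi$, using that $\phi(\theta)^n=(-1)^n(1+o(1))e^{-n|\theta-\pi\mathbf 1|^2/(2d)}$ and $n$ is even, gives the same main term. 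Adding the three contributions yields both the upper bound $\P(S_n=0)=O(n^{-d/2})$ and the matching constant $c\,n^{-d/2}$ from below (restricting to $|\theta|\le 1/\sqrt n$ where the integrand is bounded away from $0$ already suffices for the lower bound if one wants to avoid the sharp constant).

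The main obstacle is simply the uniform bound $|\phi(\theta)|\le 1-c$ away from the two unimodular points; once that is in hand, the decomposition and the Gaussian change of variables are routine. The only subtle bookkeeping is the parity/periodicity at $\pi\mathbf 1$, which is why the argument is usually presented for $n$ even and gives $\Theta(n^{-d/2})$ along that subsequence (and $0$ otherwise), consistent with the statement.
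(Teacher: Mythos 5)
The paper itself offers no proof of Theorem \ref{Polya}; it is quoted as a classical result with a citation to P\'olya's 1921 paper, so there is no in-paper argument to compare against. Your Fourier-inversion/local-CLT proof is the standard textbook route and is essentially correct: express $\P(S_n=0)$ as an integral of $\phi(\theta)^n$ over the torus, observe that $|\phi|=1$ only at $0$ and at $\pi\mathbf 1$ (mod $2\pi$), obtain the Gaussian scaling $\Theta(n^{-d/2})$ from the quadratic Taylor expansion near those two points, and show the rest of the torus contributes only an exponentially small error. Your parity remark is also correct and worth making explicit, since the statement as written is only $\Theta(n^{-d/2})$ along even $n$ (it is exactly $0$ for odd $n$), and it is what guarantees that $\phi^n\ge 0$, so the lower bound follows by restricting to $|\theta|\le n^{-1/2}$ without worrying about cancellation from the other regions.

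Two small bookkeeping points. First, on the cube $[-\pi,\pi]^d$ the set $\{|\phi|=1\}$ consists of the origin together with all $2^d$ corners $\theta\in\{-\pi,\pi\}^d$, which coincide only after identifying opposite faces; your ball $B_\pi=\{|\theta-\pi\mathbf 1|\le\delta\}$ should be taken in the torus metric (or you should excise small neighbourhoods of all the corners) for the uniform gap $|\phi|\le 1-c$ to hold on the remainder $R$. Second, the displayed ``$(1+o(1))(2\pi d)^{d/2}n^{-d/2}$'' constant needs an extra factor accounting for the contribution from $B_\pi$ and the $\frac{1}{(2\pi)^d}$ out front, but since the claim is only a $\Theta$-bound this does not matter; the two-sided estimate $c_1n^{-d/2}\le\P(S_{2m}=0)\le c_2 n^{-d/2}$ follows exactly as you outline.
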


 (In general, we can consider the drunkard starting from an arbitrary
address.  For the sake of presentation,  we delay the discussion of this
case  until the end of the section.)
  A random walk is said to be {\it recurrent }
 if it returns to its initial position with probability one. A random walk
which is not recurrent is called {\it transient. }
 Theorem \ref{Polya} implies

\begin{corollary}
The simple random walk on $\Z^d$  is recurrent in dimensions $d=1,2$  and
transient in dimension $d \ge 3$.
\end{corollary}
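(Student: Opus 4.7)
The plan is to reduce the recurrence/transience dichotomy to the classical sum criterion for random walks and then plug in Polya's asymptotic from Theorem \ref{Polya}.

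First, I would introduce the total number of visits to the origin,
$$ N := \sum_{n=0}^{\infty} \mathbf{1}_{S_n = 0}, $$
and let $p$ denote the probability that the walk returns to the origin at least once after time $0$. By the strong Markov property applied at the successive return times, conditional on the walk returning at all, the distribution of the remaining number of visits agrees with that of $N$. Hence $N$ is geometric: $\E[N] = 1/(1-p)$ when $p<1$, and $\E[N] = \infty$ when $p=1$. Since recurrence is by definition the statement $p=1$, it is equivalent to $\E[N] = \infty$. By Fubini,
$$ \E[N] = \sum_{n=0}^{\infty} \P(S_n = 0), $$
so the whole question reduces to the convergence of this series.

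Next, I would insert the estimate from Theorem \ref{Polya}, namely $\P(S_n = 0) = \Theta(n^{-d/2})$. The tail series $\sum_n n^{-d/2}$ diverges for $d \le 2$ and converges for $d \ge 3$. Combined with the dichotomy above, this yields recurrence when $d \in \{1,2\}$ and transience when $d \ge 3$, which is exactly the claim.

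There is no genuine obstacle here: all of the analytic work is packaged inside Polya's theorem, and the equivalence $p=1 \Longleftrightarrow \E[N] = \infty$ is a standard consequence of the Markov property. The only point worth verifying explicitly is that the walk described in the introduction is a time-homogeneous random walk on $\Z^d$ with i.i.d.\ increments $\xi_j f_j$, so that the strong Markov argument applies verbatim.
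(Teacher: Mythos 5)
Your proof is correct and is precisely the standard argument the paper implicitly invokes: the paper offers no explicit proof of this corollary, merely asserting that it follows from Theorem \ref{Polya}, and your reduction via $\E[N] = \sum_n \P(S_n=0)$ and the geometric structure of the visit count is the textbook route. A small technical remark worth noting but not affecting the conclusion: by parity, $\P(S_n = 0) = 0$ for odd $n$, so the asymptotic $\Theta(n^{-d/2})$ should be read along even $n$, which leaves the convergence/divergence dichotomy of the series unchanged.
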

As well known, it  is good news for our drunkard. If he lives on a
square lattice, at least !

\vskip2mm
The goal of this note  is to show that once the drunkard leaves the
lattice, life is no longer  rosy.
As  a matter of fact,  dimension 2 (where he actually moves  in) turns out
to be the worst.

In what follows, we  let the drunkard walk with rather general directions.
Technically,  at each tick of the clock, he chooses a vector, and decides
to walk along its direction or in the opposite one.
The new  and critical assumption here is that the vectors are not to be
repeated. (One can imagine, for example, that our drunkard is walking on a
big plaza and he himself decides his next step.)
Mathematically, we consider the random walk $$S_{n, V}=\eta_1 v_1 + \eta_2
v_2 + \cdots + \eta_n v_n$$
  where $V := \{ v_1, v_2, \cdots, v_n \} $ is a set of $n$ different unit
vectors in $\mathbb{R}^d$, and
$\eta_i$ are again i.i.d. Bernoulli random variables.
 We say that $V$ is {\it effectively} $d$-dimensional if there is no
hyperplane that contains more than $.99 n$ vectors in $V$ (where $.99$
can be replaced by
any constant less than 1).

Here are the (bad) news

\begin{theorem} \label{main1}   Consider a set $V$ of $n$ different unit
vectors which is effectively $d$-dimensional. Then
\begin{itemize}
\item For $d \ge 4,$
$ \P (S_{n, V} =0)  \le n^{-\frac{d}{2}-\frac{d}{d-2}+ \ o(1)}. $
\item For $d=3$, $\P(S_{n, V}=0 ) \le   n^{-4 +o(1)}  . $
\item For $d=2$, $\P (S_{n, V} =0)  \le n^{-\omega (1) }. $ \end{itemize}
\end{theorem}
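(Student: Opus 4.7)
The plan is a Fourier (Esseen--Halász) attack combined with the geometric constraint that the $v_j$ lie on $S^{d-1}$. By an Esseen-type inequality,
$$\P(S_{n,V}=0)\;\le\; C_d\cdot\limsup_{R\to\infty}\,\frac{1}{\Vol B(0,R)}\int_{B(0,R)}\prod_{j=1}^n\bigl|\cos\langle t,v_j\rangle\bigr|\,dt,$$
and the elementary bound $|\cos x|\le\exp(-c\,\dist(x,\pi\Z)^2)$ reduces matters to controlling averages of $e^{-cF(t)}$ on large balls, where $F(t):=\sum_j \dist(\langle t,v_j\rangle,\pi\Z)^2$. Throughout, let $M:=\sum_j v_jv_j^{\top}$; the ``effectively $d$-dimensional'' hypothesis immediately yields $\lmin(M)\ge cn$ via a standard spectral argument.

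\textbf{Proof for $d\ge 3$.} I would split the integral into the Polya regime $|t|=O(1)$ and the sphere regime $|t|\gg 1$. In the Polya regime, each $\dist(\langle t,v_j\rangle,\pi\Z)$ equals $|\langle t,v_j\rangle|$, so the exponent is the quadratic form $t^{\top}Mt\ge cn|t|^{2}$, and the resulting Gaussian integral contributes $O(n^{-d/2})$. In the sphere regime, write $t=\ell\theta$ with $\ell=|t|$ large and $\theta\in S^{d-1}$: $F(t)$ is small only if for many $j$ the inner product $\ell\langle\theta,v_j\rangle$ is within $O(\ell/\sqrt n)$ of some multiple of $\pi$, which forces many $v_j$ to lie in thin bands around one of the $\lesssim\ell$ ``latitude'' $(d-2)$-spheres $\{v\in S^{d-1}:\langle\theta,v\rangle=k\pi/\ell\}$. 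The measure of pairs $(\theta,\ell)$ for which this occurs is bounded by an incidence-geometric estimate between $n$ points on $S^{d-1}$ and $O(\ell)$ sub-sphere neighbourhoods. In dimensions $d\ge 4$ a sufficient Szemer\'edi--Trotter-style estimate is available; summing over dyadic $\ell$ and balancing against the Polya contribution then produces the sharp exponent $d/2+d/(d-2)$. In dimension $3$ only a weaker but unconditional incidence estimate is available, yielding the exponent $4$, and the sharper sphere--point incidence conjecture alluded to in the abstract would upgrade this to $9/2$.

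\textbf{Proof for $d=2$.} Here the latitude ``spheres'' degenerate to pairs of antipodal points, so the large-$t$ mechanism collapses. Instead I would exploit that $n$ distinct unit vectors in $\R^2$ lie on the strictly convex curve $S^1$. By the inverse Littlewood--Offord theorem of Tao--Vu, any polynomial lower bound $\P(S_{n,V}=0)\ge n^{-C}$ forces all but $o(n)$ of the $v_j$ to lie in a generalized arithmetic progression whose rank and volume are controlled by $C$. However, a GAP in $\R^2$ of volume $V$ meets $S^1$ in only $O(V^{1/2+o(1)})$ points by a Bombieri--Pila/convex-curve estimate, and iterating the structural reduction on the residual set on the circle rules out $\P(S_{n,V}=0)\ge n^{-C}$ for every fixed $C$, producing the super-polynomial bound $n^{-\omega(1)}$.

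\textbf{Main obstacle.} The delicate point is the sphere-incidence input in the large-$|t|$ regime of the $d\ge 3$ argument: the exponent $d/(d-2)$ is dictated by the growth rate of integer-point counts on spheres (saturated by the lattice-points-on-a-sphere example, where rescaling by $\sqrt N$ converts the walk into an integer walk of effective length $\sqrt{nN}$ and LCLT gives exactly $n^{-d/2-d/(d-2)}$), and the $d=3$ case is genuinely harder than $d\ge 4$ because the requisite sphere-point incidence bound is both weaker and, in its sharp form, only conjectural. The remaining ingredients --- the Esseen--Fourier reduction, the Gaussian estimate in the Polya regime, the spectral bound $\lmin(M)\ge cn$, and the inverse-Littlewood--Offord-plus-convex-curve input in $d=2$ --- are essentially standard.
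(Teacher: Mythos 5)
Your proposal diverges substantially from the paper and has two genuine gaps, one in each regime.

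\textbf{Dimensions $d\ge 3$.} You propose an Esseen--Hal\'asz Fourier integral split into a Polya regime $|t|=O(1)$ and a large-$|t|$ regime, with the latter controlled by an incidence estimate between $n$ points on $S^{d-1}$ and $O(\ell)$ thin latitude slabs. The paper does none of this. Its upper bound for $d\ge3$ is obtained \emph{without any Fourier analysis at all}: it applies the inverse Littlewood--Offord theorem (Theorem \ref{LO}) to get a proper symmetric GAP $Q$ of rank $r\ge d$ with $|Q|=O(\rho(V)^{-1}n^{-r/2})$ containing $0.99n$ of the vectors, decomposes $Q=Q'+Q''$ with $Q''$ the rank-$2$ slice along the two largest dimensions, uses Corollary \ref{circle} (a consequence of Chang's factorization theorem) to show each translate $x+Q''$ meets the unit sphere in at most $n^{o(1)}$ points, and concludes $|Q|\ge n^{r/(r-2)-o(1)}$. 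Comparing to the inverse-LO upper bound gives $d/2+d/(d-2)\ge r/2+r/(r-2)$, which fails for $r\ge d\ge4$ by monotonicity of $f(r)=r/2+r/(r-2)$; for $d=3$ the minimum of $f$ over $r\ge3$ is $f(4)=4$, hence the exponent $4$. Your route, by contrast, requires a quantitative point--sphere incidence theorem that you assert ``is available'' for $d\ge4$ and exists ``in a weaker but unconditional form'' for $d=3$, but you do not supply it, and in fact the paper's Section \ref{incidence problem} makes clear that the sharp sphere incidence statement is \emph{open even as a conjecture} in $d=3$. You cannot substitute a conjectural incidence theorem for the elementary Chang-based GAP/circle lemma and still claim a proof.

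\textbf{Dimension $d=2$.} Here your structure (inverse Littlewood--Offord plus a bound on how much of a GAP can lie on the circle) does match the paper, but your key intersection bound is the wrong one and is too weak. You claim that a GAP of volume $V$ meets $S^1$ in $O(V^{1/2+o(1)})$ points by a Bombieri--Pila type estimate. First, Bombieri--Pila counts integer points in a box $[0,N]^2$ on a strictly convex curve, $O(N^{2/3+\epsilon})$; a rank-$r$ GAP with $r>2$ is not that box, and the intersection with a circle is a quadratic Diophantine problem in $r$ variables, which Bombieri--Pila does not control. Second, even granting your $V^{1/2+o(1)}$ bound, it does not close the argument: inverse LO gives $|Q|=O(n^{C-r/2})$, so your bound yields $0.99n\le |Q|^{1/2+o(1)}\le n^{(C-r/2)/2+o(1)}$, i.e.\ $C\ge r/2+2-o(1)$, which is satisfied for any fixed large $C$, so no contradiction. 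The ``iterating the structural reduction'' remark does not obviously repair this, since the inverse LO theorem already absorbs all but $o(n)$ of $V$ on the first pass. What the paper actually uses is Corollary \ref{circle}, proved from Chang's theorem on factorizations in GAPs (Theorem \ref{chang}): a constant-rank GAP of size $n$ meets any circle in at most $n^{o(1)}$ points. That sub-polynomial bound is the whole point, and it is the ingredient your proposal is missing.
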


The assumption "effectively $d$-dimensional"  is necessary, otherwise one
can take $V$ in a lower dimensional subspace and have a better bound.
The bound for $d \ge 4$ is sharp, as we can construct a set $V$ such that
$  \P (S_{n, V})  \ge n^{-\frac{d}{2}-\frac{d}{d-2}+ \ o(1)}$.
We conjecture that this is the sharp bound in the case $d=3$ as well.
This conjecture would follow a new conjecture concerning incidences in
$\R^3$, which is of independent interest (see Section
\ref{incidence problem} for details).

The real bad news for the drunkard is the case $d=2$, where no matter how
he chooses the set $V$, the returning probability is super polynomially
small. Deciding the order of the exponent is an interesting problem. We
can construct a set $V$ which provides  $\P( S_{n, V} =0) \ge n^{- C \log
\log n}$ for some constant $C >0$.

Theorem \ref{main1} holds under a weaker assumption. We can allow the
vectors in $V$ to take different lengths and also have some
multiplicities.
We say that $V$ is $(L,M)$-typical  if the vectors of $V$ have lengths in
a set $\CL$ of size $L$, and each vector has multiplicity at most $M$. 
Furthermore, we can allow
the target to be any point $x \in \R^d$.  (This corresponds to a walk
starting at $-x$ and ending at the origin.)

\begin{theorem}  \label{main2}
Let $V$ be a $(L, M)$-typical set  which is effectively $d$-dimensional,
where both $L, M =n^{o(1) }$. Then
the upper bounds in Theorem \ref{main2} holds for  the probability $\P (
S_{n, V } = x )$, for any $x \in \R^d$.
\end{theorem}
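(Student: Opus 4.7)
We reduce Theorem \ref{main2} to Theorem \ref{main1} at a cost of only $n^{o(1)}$ factors, which the hypothesis $L,M=n^{o(1)}$ absorbs into the $o(1)$ of the exponent. Two new features must be addressed: the arbitrary target $x\in\R^d$ and the $(L,M)$-typical structure of $V$.

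The target $x$ is essentially free. The proof of Theorem \ref{main1} conditions on all but finitely many of the $\eta_i$, reducing $\{S_{n,V}=0\}$ to the event that a partial Rademacher sum $T$ lies on a sphere of prescribed radius and centre. Replacing the target by $x$ merely translates these spheres, and the relevant point--sphere incidence bounds in $\R^d$ depend only on the radii and on the geometry of $\mathrm{supp}(T)$, not on the sphere centres; so the very same proof yields a bound uniform over $x$.

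For the $(L,M)$-typicality, let $v_{(1)},\dots,v_{(n')}$ be the distinct vectors of $V$ with multiplicities $k_j\le M$, and put $X_j=\sum_{i:v_i=v_{(j)}}\eta_i$ so that $S_{n,V}=\sum_j X_j v_{(j)}$ with $n'\ge n/M=n^{1-o(1)}$. Since each $X_j$ is symmetric about $0$, conditional on $|X_j|$ the sign $\xi_j=\mathrm{sign}(X_j)$ (for those $j$ with $X_j\ne 0$) is Rademacher, and we reduce to bounding $\sup_y\P\bigl(\sum_{j\in J}\xi_j(|X_j|v_{(j)})=y\bigr)$ over realisations of $\{|X_j|\}$. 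With overwhelming probability $|J|\ge n^{1-o(1)}$, and the vectors $|X_j|v_{(j)}$ take at most $L(M+1)=n^{o(1)}$ distinct lengths. Pigeonholing on length and conditioning on the Rademacher signs in the smaller length classes reduces the problem to bounding $\sup_y\P\bigl(\sum_{v\in V^{*}}\eta_v v=y\bigr)$ for a set $V^{*}$ of $\ge n^{1-o(1)}$ distinct unit vectors, to which Theorem \ref{main1} then applies.

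The main obstacle is guaranteeing that $V^{*}$ inherits effective $d$-dimensionality. A single length class of $V$ need not be so, even if $V$ is. However, since each direction in $V$ carries total weight at most $LM=n^{o(1)}$ (at most $L$ distinct lengths, each of multiplicity at most $M$), effective $d$-dimensionality of $V$ forces at least $0.01\,n/(LM)=n^{1-o(1)}$ distinct directions to lie outside any given hyperplane. Thus $V^{*}$ enjoys a weakened but still substantial ``$n^{-o(1)}$-fractional'' effective $d$-dimensionality. Since the proof of Theorem \ref{main1} is quantitatively robust in the effective-dimension constant (which enters only polynomially into the point--sphere incidence bounds), this weaker hypothesis still delivers the same exponent at merely an $n^{o(1)}$ cost, exactly within the slack afforded by $L,M=n^{o(1)}$.
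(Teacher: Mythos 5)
Your proposal departs from the paper's much terser argument, and in doing so you have correctly spotted a genuine issue that the paper passes over in silence — but your repair of that issue does not go through.

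First, the route. The paper handles the arbitrary target $x$ in one line: Theorem~\ref{LO} already controls $\rho(V)=\sup_x\P(S_{n,V}=x)$, so nothing in the proof of Theorem~\ref{main1} ever used $x=0$. It handles $(L,M)$-typicality by a single pigeonhole: pick a length $t$ realized by at least $n/(LM)=n^{1-o(1)}$ distinct vectors, call these $V'$, condition on all the remaining $\eta_i$ (including the extra copies of vectors in $V'$), and run the Theorem~\ref{main1} argument on $V'$ alone. Your detour through the per-direction partial sums $X_j$, the conditioning on $\{|X_j|\}$, the extraction of Rademacher signs, and the second pigeonhole on the lengths $|X_j|\,|v_{(j)}|$ reaches the same place at considerably greater cost and with extra loose ends (for instance, parallel distinct directions $v_{(j_1)}\parallel v_{(j_2)}$ of different lengths can collide after rescaling by $|X_j|$, so ``distinct unit vectors'' in $V^*$ is not automatic). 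None of this machinery is needed: keeping one copy of each chosen vector and absorbing the rest into the conditioning already works.

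Second, the gap. You are right that the reduced set ($V'$ in the paper's notation, $V^*$ in yours) need not inherit effective $d$-dimensionality; this is a real omission in the paper's two-line proof. But your fix does not establish it. The count of $\ge 0.01\,n/(LM)$ distinct directions of $V$ outside any hyperplane is a property of $V$; there is no reason those directions survive the pigeonhole and land in $V^*$. Concretely, in $\R^5$ take $n/3$ unit vectors filling $\mathrm{span}(e_1,e_2,e_3)$, $n/3$ length-$2$ vectors filling $\mathrm{span}(e_1,e_2,e_4)$, and $n/3$ length-$3$ vectors filling $\mathrm{span}(e_1,e_2,e_5)$: the union is effectively $5$-dimensional, yet every length class lies in a $3$-dimensional subspace, so every pigeonhole produces a $V^*$ whose GAP rank is only guaranteed to satisfy $r\ge3$, giving the exponent $\min_{r\ge3}(r/2+r/(r-2))=4<25/6$. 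Your closing sentence also misdiagnoses where the hypothesis enters: effective $d$-dimensionality is not a polynomial-loss constant in an incidence bound, it is the hard constraint $r\ge d$ on the rank of the GAP, and that rank bound is exactly what produces the exponent $d/2+d/(d-2)$. A correct repair is to forgo the pigeonhole and run the Theorem~\ref{main1} argument on the full multiset $V$: since $V$ lies on only $L=n^{o(1)}$ spheres and has multiplicities at most $M=n^{o(1)}$, Corollary~\ref{circle} still gives $|(x+Q'')\cap V|\le LM\,n^{o(1)}=n^{o(1)}$, while effective $d$-dimensionality of $V$ \emph{itself} forces $r\ge d$.
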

 
In the next section, we present our main lemmas. The proofs of the
theorems follow in Section \ref{argument}. We conclude with an open
problem in incidence geometry which would imply the sharp bound in the
case
$d=3$.

\section{The main lemmas} \label{section:lemmas}

In this section, we describe our main tools.  Let $G$ be an abelian group.
A generalized arithmetic progression (GAP) in $G$  is a set of the form
 $$ Q(a_0, a_1,\cdots, a_r, N_1,\cdots,N_r)=\{a_0+ x_1 a_1+ \cdots + x_r
a_r|  0 \le x_i \le N_i \} .$$
 
We refer to  $ r$ as the rank, and call   $a_0,a_1, \cdots, a_r \in G$
the generators, and  $N_1, \dots, N_r$  the dimensions of $Q$.
It's generally useful to think of a generalized arithmetic progression, as
the image of the discrete $r$-dimensional box $ [0,N_1]\times \dots
\times[0,N_r]$ under the map

  \begin{align*}
   \Phi : [0,N_1]\times \dots \times[0,N_r]\, &\longrightarrow  \ G
   \\   ( x_1,\cdots , x_r) \ &\longrightarrow a_0 + a_1 x_1 + \cdots +
a_r x_r
\end{align*}

We say that $Q$ is proper if the above map is injective.
We say that $Q$ is symmetric if it can also be written in the form
\newline $$Q=\{ n_1 b_1 + n_2 b_2 + \cdots + n_r b_r :\ -M_i\leq n_i \leq
M_i,  i=1\ldots r \}$$ 
for some $b_1,\ldots, b_r \in \mathbb{R}^d$ and
$M_1,\ldots, M_r \in \mathbb{N}$.

Let $V$ be a set of $n$ vectors, define the concentration probability

 \begin{equation} \label{rho} \rho (V)= \displaystyle\sup_{\substack{x \in
\mathbb{R}^d }} P (S_{n, V} =x).  \end{equation}
 
We are going to use the following result  in \cite{InverseLO}, which
asserts that a set of vectors with
 high concentration probability must necessarily be, up to a few elements,
a subset of a generalized arithmetic progression of small cardinality.

\begin{theorem}(Optimal inverse Littlewood-Offord theorem) \label{LO}
Let $\epsilon < 1$ and $C$ be positive constants. Assume that $\rho(V)
\geq n^{-C}$.
Then, there exists a proper symmetric GAP $Q$, of some rank $r=O_{C,
\epsilon}(1)$, that contains at least $(1-\epsilon)n$ elements of $V$,
such that $$|Q|=O_{C, \epsilon}(\rho(V)^{-1}n^{-\frac{r}{2}})$$
\end{theorem}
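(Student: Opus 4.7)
The plan is to deduce Theorem \ref{main2} from the argument already given for Theorem \ref{main1}, with the three generalizations (arbitrary target $x$, $L = n^{o(1)}$ distinct lengths, multiplicities at most $M = n^{o(1)}$) contributing only $n^{o(1)}$ losses that are absorbed into the $o(1)$ exponent.

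The target-$x$ extension is automatic. The proof of Theorem \ref{main1} proceeds by invoking Theorem \ref{LO}, whose hypothesis is a lower bound on the concentration probability $\rho(V) = \sup_y \P(S_{n,V}=y)$ defined in~\eqref{rho}. Hence the conclusions of Theorem \ref{main1} are really upper bounds on $\rho(V)$, and the inequality $\P(S_{n,V}=x) \le \rho(V)$ immediately delivers the extension to arbitrary $x \in \R^d$.

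To handle the $(L,M)$-typical structure I would replay the Theorem \ref{main1} argument. Assume for contradiction that $\rho(V) > n^{-\beta+\epsilon}$, where $\beta$ is the target exponent for dimension $d$ (for instance $\beta = d/2+d/(d-2)$ when $d\ge 4$). Applying Theorem \ref{LO} yields a proper symmetric GAP $Q\subset\R^d$ of rank $r=O(1)$ with $|Q|=O(\rho(V)^{-1}n^{-r/2})$ that contains at least $(1-\delta)n$ elements of $V$. Since every distinct vector appears with multiplicity at most $M$, the GAP $Q$ contains at least $(1-\delta)n/M = n^{1-o(1)}$ \emph{distinct} vectors of $V$, distributed among $L = n^{o(1)}$ concentric spheres (one per length in $\CL$). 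Pigeonholing over lengths produces a single length $\ell^{*}\in\CL$ such that $Q$ contains at least $n^{1-o(1)}$ distinct vectors of length $\ell^{*}$; rescaling by $(\ell^{*})^{-1}$ converts $Q$ into a proper symmetric GAP $Q'$ of the same rank and cardinality containing $n^{1-o(1)}$ distinct unit vectors. This is exactly the input configuration analyzed in the proof of Theorem \ref{main1}, so the sphere-incidence / volume lower bound on $|Q'|=|Q|$ obtained there, combined with $|Q|=O(\rho(V)^{-1}n^{-r/2})$, contradicts the hypothesis $\rho(V)>n^{-\beta+\epsilon}$.

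The main obstacle is to propagate the effectively $d$-dimensional hypothesis on $V$ through the pigeonhole step, since a priori a single length class may be concentrated on a hyperplane even though $V$ as a whole is not. Effective $d$-dimensionality forces at most $0.99 n$ vectors of $V$ (with multiplicity) into any given hyperplane, so at least $(0.01-\delta)n/M = n^{1-o(1)}$ distinct vectors of $V\cap Q$ lie off any hyperplane; however, the offending hyperplane may vary with the length class, so a naive pigeonhole does not automatically produce a single length whose rescaled unit-vector set is itself effectively $d$-dimensional. The cleanest resolution is to extend the sphere-incidence bound underlying Theorem \ref{main1} to count incidences between $Q$ and the $L$ concentric spheres simultaneously: applying the single-sphere estimate to each sphere separately and summing costs only a multiplicative factor of $L = n^{o(1)}$, which is absorbed into the $o(1)$ in the exponent and reduces Theorem \ref{main2} to the analysis already performed for Theorem \ref{main1}.
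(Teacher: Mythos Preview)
Your proposal does not address the stated Theorem~\ref{LO} at all. Theorem~\ref{LO} is the optimal inverse Littlewood--Offord theorem of Nguyen and Vu, which the paper merely \emph{quotes} from \cite{InverseLO}; there is no in-paper proof of it to compare against. What you have actually written is an argument for Theorem~\ref{main2}, so the only meaningful comparison is with the paper's proof of that result in Section~\ref{argument}.

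On that comparison, the two routes agree on the target-$x$ extension but diverge at the $(L,M)$-typical step. The paper does not apply Theorem~\ref{LO} to the full multiset $V$. Instead it pigeonholes \emph{before} any GAP analysis: among the $n$ vectors there are at least $n/(LM)=n^{1-o(1)}$ distinct vectors of a common length, and after conditioning on the remaining $\eta_i$ one is left with bounding $\rho(V')$ for this sub-collection, to which the Theorem~\ref{main1} argument is reapplied. Your route instead applies Theorem~\ref{LO} to all of $V$, obtains the GAP $Q$, and replaces the single-sphere incidence count inside the Theorem~\ref{main1} proof by a sum over the $L=n^{o(1)}$ concentric spheres carrying $V$. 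Both approaches are short and correct. The paper's version is more modular, treating Theorem~\ref{main1} as a black box; however, it passes silently over exactly the point you flagged, since the pigeonholed set $V'$ need not itself be effectively $d$-dimensional, so invoking Theorem~\ref{main1} on $V'$ verbatim is not literally justified. Your summing-over-spheres variant sidesteps this cleanly, because effective $d$-dimensionality enters only through the rank bound $r\ge d$, and you apply Theorem~\ref{LO} to the full $V$ where that hypothesis is available.
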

 
Our next tool is a result of Chang  \cite{chang}.  For a set $X$ and a
number $m$, both in the complex plane, denote
by $r_2 (m ; X)$  the number of ways to write $m$  as a product of two
elements of $X$.

\begin{theorem}        \label{chang} For any fixed $r$ there  is some constant
$C_r>0$ such that the following holds.
Let $Q$ be a GAP of complex numbers of rank $r$ and dimensions $N_1,
\cdots, N_r$.
Let $N=\max_i N_i$.  Then for all $m \in \mathbb{C} $,
$$ r_2 (m ; Q) \le  N^{\frac{C_r}{\log \log N}} $$
\end{theorem}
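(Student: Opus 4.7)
The plan is to reduce the bound on $r_2(m;Q)$ to the classical divisor-function estimate $d(n)\le n^{O(1/\log\log n)}$ (Wigert--Ramanujan). Observe that $r_2(m;Q) = |\{q\in Q : m/q \in Q\}|$, so we are counting pairs of \emph{divisors} of $m$ that both lie in $Q$. If $Q$ sat inside $\mathbb{Z}$, the bound would be immediate: each element of $Q$ has absolute value at most $|a_0|+\sum_i N_i|a_i|=O_r(N)$, so $m$ has absolute value at most $O_r(N^2)$, and $r_2(m;Q)\le d(m)\le N^{C_r/\log\log N}$. The real work is extending this to GAPs in $\mathbb{C}$ with a bound independent of the size of the generators $a_i$.

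To carry out the reduction, I would first parametrize $q = a_0+\sum x_i a_i$ by its coordinate vector $(x_1,\dots,x_r) \in [0,N_1]\times\cdots\times[0,N_r]$. A factorization $q_1 q_2 = m$ becomes a quadratic Diophantine equation
\[
\Bigl(a_0+\sum_i x_i a_i\Bigr)\Bigl(a_0+\sum_j y_j a_j\Bigr)=m
\]
in the integer vector $(x_1,\dots,x_r,y_1,\dots,y_r)$. The coefficients of this quadratic live in the finitely generated subring $R=\mathbb{Z}[a_0,a_1,\dots,a_r]\subset\mathbb{C}$. If the products $a_i a_j$ are $\mathbb{Q}$-linearly independent, comparing coefficients splits the equation into many integer equations and pins the solutions almost uniquely; the more delicate case is when the $a_i a_j$ satisfy $\mathbb{Q}$-linear relations, which is exactly when the multiplicative structure on $Q$ is nontrivial.

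To handle this delicate case, I would pass to an integral quotient of $R$: choose a prime ideal $\mathfrak{p}\subset R$ of residue characteristic $p$ much larger than any height bound on $Q$, so that the reduction map $R\to R/\mathfrak{p}$ is injective on $Q$. The ring $R/\mathfrak{p}$ embeds into a number field (or into a finite extension of $\mathbb{F}_p$ in positive characteristic, which can be avoided by a suitable choice of $\mathfrak{p}$), and in a number ring the divisor bound $d(\alpha)\le N_{K/\mathbb{Q}}(\alpha)^{O(1/\log\log N(\alpha))}$ continues to hold by factoring into prime ideals and estimating the number of ideal-divisors. Because each $q\in Q$ has bounded absolute value (hence bounded height in any fixed number field) and because the bound is uniform in the ramification data once $r$ is fixed, we obtain $r_2(m;Q)\le N^{C_r/\log\log N}$.

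The main obstacle is the claim of uniformity in the generators: the constant $C_r$ must depend only on the rank $r$ and not on the arithmetic complexity (heights, degrees, discriminants) of the $a_i$. Overcoming this requires a careful Freiman-type reduction showing that, up to a bounded-loss in rank, one may assume the $a_i$ generate a ring of controlled type over $\mathbb{Z}$, together with a uniform divisor estimate in such rings. This is where the bulk of Chang's argument lies, and where I would expect the proof to require the most care.
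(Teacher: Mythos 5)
The paper does not prove Theorem~\ref{chang}; it states it as a black box and cites Chang's GAFA paper (\cite{chang}). There is therefore no internal proof to compare your sketch against, and I can only assess the sketch on its own merits.

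Your opening reduction is in the right spirit: $r_2(m;Q)$ is a count of factor pairs, and in the model case $Q\subset\mathbb{Z}$ the statement does follow immediately from the Wigert--Ramanujan divisor bound together with the observation that $|m|=O_r(N^2)$. The trouble begins at the "pass to an integral quotient" step, where the sketch contains an internal contradiction. You ask for a prime ideal $\mathfrak{p}\subset R=\mathbb{Z}[a_0,\dots,a_r]$ of residue characteristic $p$ large compared with the height of $Q$ (to force injectivity of $Q\to R/\mathfrak{p}$), but also ask for $R/\mathfrak{p}$ to embed in a number field so the divisor bound applies. These pull in opposite directions: a prime of positive residue characteristic gives a finite quotient, where the divisor bound is vacuous (in a finite field every nonzero element divides every nonzero element, so counting divisor pairs gives no information); while a quotient that is a number ring must come from a minimal prime of $R$, which has characteristic $0$ and cannot be chosen "large." Moreover, if the $a_i$ are algebraically independent over $\mathbb{Q}$ there is no quotient of $R$ by a prime that is a number ring compatible with preserving the products you care about, so the transcendental case cannot be handled by this device at all (it must be treated directly, where in fact it is easy: the equation $(a_0+\sum x_ia_i)(a_0+\sum y_ja_j)=m$ splits coefficient-by-coefficient into $O_r(1)$ integer constraints). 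Finally, you flag the uniformity of $C_r$ in the generators as "requiring a Freiman-type reduction," but this is exactly the heart of the matter, not a cleanup step, and the sketch gives no indication of what that reduction is or why it produces a number ring of degree and discriminant bounded in terms of $r$ alone. As written, the proposal identifies the correct target (a divisor-bound argument) but the central mechanism for getting there does not work.
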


We use this theorem to prove the following corollary.
\begin{corollary} \label{circle}
Let $Q$ be a GAP in $\mathbb{R}^2$ with constant rank $r$ and cardinality
$n$.
Let  $S \subseteq \mathbb{R}^2 $ be an arbitrary  circle. Then $$|Q\cap S|
\leq n^{o(1)}.$$
\end{corollary}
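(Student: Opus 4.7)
The plan is to identify $\mathbb{R}^2$ with $\mathbb{C}$ via $(x,y)\leftrightarrow x+iy$, convert the circle incidence into a multiplicative factorization problem, and then invoke Chang's theorem. A circle $S = \{z : |z-z_0| = R\}$ in $\mathbb{C}$ is precisely the solution set of $(z-z_0)(\overline{z}-\overline{z_0}) = R^2$. Translating by $-z_0$, the set $Q' := Q - z_0$ is still a GAP of rank $r$ with the same dimensions $N_1,\ldots,N_r$ and the same cardinality $n$, and $|Q\cap S|$ equals the number of $w\in Q'$ with $w\bar w = R^2$.

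The next step is to realize both $Q'$ and its conjugate $\overline{Q'}$ inside a single ambient GAP in $\mathbb{C}$ of constant rank. Writing $Q' = \{b_0 + x_1 a_1 + \cdots + x_r a_r : 0 \le x_i \le N_i\}$, I would take $\tilde Q$ with base point $\overline{b_0}$, generators $b_0-\overline{b_0},\, a_1,\ldots,a_r,\, \overline{a_1},\ldots,\overline{a_r}$, and dimensions $1, N_1,\ldots,N_r, N_1,\ldots,N_r$. Setting the first coordinate equal to $1$ and the $\overline{a_i}$-coordinates to $0$ realizes $Q'\subset\tilde Q$; setting the first coordinate and the $a_i$-coordinates to $0$ realizes $\overline{Q'}\subset\tilde Q$. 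This $\tilde Q$ has rank $2r+1$ (constant, since $r$ is constant) and maximum dimension $\tilde N \leq n$.

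Every $w\in Q'$ with $w\bar w = R^2$ contributes an ordered factorization $R^2 = w\cdot \overline{w}$ whose two factors lie in $\tilde Q$. Hence
$$|Q\cap S| \ \leq\ r_2(R^2;\tilde Q) \ \leq\ \tilde N^{C_{2r+1}/\log\log\tilde N} \ \leq\ n^{C_{2r+1}/\log\log n} \ =\ n^{o(1)}$$
by Theorem \ref{chang}. The only conceptually nontrivial step is the arithmetic inequality of Chang, which is handed to us; everything else is a direct geometric-to-algebraic encoding. The small bookkeeping point that could trip one up is ensuring that both $Q'$ and $\overline{Q'}$ embed into one common GAP of bounded rank and of dimensions $O(n)$, but this is handled by the single extra ``base-shift'' generator $b_0 - \overline{b_0}$ above, so I do not anticipate any serious obstacle.
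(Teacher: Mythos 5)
Your proof is correct and follows essentially the same route as the paper's: translate so the circle is centered at the origin, embed $Q'$ and its conjugate into a single ambient GAP of bounded rank, and bound $|Q\cap S|$ by $r_2(R^2;\cdot)$ via Chang's theorem. The only (cosmetic) difference is that you merge $b_0$ and $\overline{b_0}$ into one extra generator to get rank $2r+1$, whereas the paper treats $a_0$ and $\overline{a_0}$ as two generators of dimension $1$ and obtains rank $2r+2$; either choice gives the same $n^{o(1)}$ bound.
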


\begin{proof}  (Proof of Corollary \ref{circle} )
For  any $x\in \mathbb{C}$ denote its complex conjugate by $\bar{x}$.
Let $S$ be a circle of radius $R$.  As we can shift $Q$, we can assume,
without loss of generality, that $S$ is centered at $0$.
Let $Q=\{a_0+ x_1 a_1+ \cdots + x_r a_r| x_i=0,\cdots,N_i, \forall i \}
\subseteq \mathbb{C}$, with $|Q|= n$.
 Consider \ $$P=\{x_0 a_0 + x_1 a_1+ \cdots + x_r a_r + y_0\bar{a}_0 + y_1
\bar{a}_1+ \cdots + y_r \bar{a}_r|  \,x_0, y_0\in\{ 0,1\}; |x_i|, |y_i|
\le N_i \}.$$
By the above theorem

  $$ r_2 (R^2 ; P) \le  N^{\frac{C_{2r+2}}{\log \log n}} \le
n^{\frac{C_{2r+2}}{\log \log n}}=n^{o(1)} .$$

 On the other hand,  $ x\in S $ if and only if $ x\cdot\bar{x}=  R^2$. As
$P$ contains all elements of $Q$ and their conjugates,
 it follows that $$|Q\cap S| \leq   r_2 (R^2 ; P)= n^{o(1)} .$$ \end{proof}

\section{Proof of Theorems \ref{main1}  and \ref{main2} } \label{argument}
In the first two subsections, we prove Theorem \ref{main1}.
\subsection{ $d \ge 4$: Upper bound }
 
Consider a set $V$ and assume, for a contradiction, that $\P ( S_{n, V }
=0 ) \ge n^{-d/2 - d/(d-2) + \delta} $ for some constant $\delta >0 $.
By Theorem \ref{LO}, there is a proper symmetric GAP $Q$ of constant  rank
$r$,  which contains at least $.99 n$ elements of $V$, and
 
\begin{equation}\label{smallQ}  |Q| = O( n^{d/2 +d/(d-2) -r/2 -\delta } ).
\end{equation}

In what follows, we derive a lower bound that contradicts \eqref{smallQ} .
Let  $Q :=\{ n_1 a_1 + n_2 a_2 + \cdots + n_r a_r;  |n_i| \le N_i \} $,
$Q' := \{ n_3 a_3 + n_4 a_4 + \cdots + n_r a_r;  |n_i| \le N_i \} $ and
$Q^{''}:= \{ n_1 a_1 +n_2 a_2. |n_i | \le N_i \}$. We can assume, without
loss of generality, that $N_1, N_2$ are the two largest dimensions, which
implies that $|Q'| \le |Q|^{(r-2) /r} $.

By Corollary \ref{circle} and the fact  that the vectors in $V$ have unit
length, we conclude that for any $x \in Q'$, $|(x + Q^{''} ) \cap V |
\leq n^{o(1) }$. Since $V = \cup_{x \in Q'} (x+ Q^{''} ) \cap V$, it
follows that
$n \le n^{o(1)}  |Q|^{(r-2) /r } $, or equivalently, $|Q| \ge n^{ r/(r-2)
-o(1) }$.   Together with \eqref{smallQ}, we have

$$ n^{d/2 +d/(d-2) -r/2 -\delta }   \ge n ^{r/(r-2) -o(1) }. $$

On the other hand $V$ is effectively $d$-dimensional, so $r \ge d$. For $r
\ge 4$, the function $f(r)= r/2 + r/(r-2) $ is strictly monotone
increasing. This implies that the above
inequality cannot hold for sufficiently large $n$, a contradiction.
 
\subsection{  $d=3$: Upper bound}
One can repeat the above argument, but we can no longer use the fact that
$f(r)= r/2 + r/(r-2)$ is monotone.  As a matter of fact
$f(3) =9/2$ is large than both $f(4) =4$ and $f(5)= 25/6$. As $f(r) \ge 5$
for all $r \ge 6$, the worst value one can take is $f(4)=4$, which results
in the upper bound
$n^{-4 +o(1) }$.
 
\subsection{$d=2$: Upper bound }
Consider a set $V$ and assume,  for a
contradiction, that $ P(S_{n,V}=0) \geq \frac{1}{n^C}$,  for some constant
$C$.
By Theorem \ref{LO}, there is  proper symmetric GAP $Q$, of some  constant
rank  $r=O_{C, \epsilon}(1)$, that contains at least $(1-\epsilon)n$
elements of $V$,  and with $$|Q|=O_{C,
\epsilon}(\rho(V)^{-1}n^{-\frac{r}{2}})$$
However, by Corollary \ref{circle}, such $Q$ can only contains $|Q|^{o(1)
}$ points from the unit circle, which, in turns, contains $V$. This
provides the desired contradiction.

\subsection{ Lower bounds }
Let us start with the case $d \ge 3$. We construct a set $V$ such that

$$ \P( S_{n, V} = 0) \ge n^{-d/2 - d/(d-2) -o(1) } . $$

By classical results on Waring's problem \cite{Vaughan}, the number of ways to write an
integer $N$ as sum of $d$ squares is at least $n := N^{ (d-2)/2 +o(1) } $,
for any fixed $d \ge 4$ and all sufficiently large $N$. This
means the sphere  of radius $R := N^{1/2} $ (centered at the origin)
 contains at least $n$ lattice vectors. Let $V$ be the set of these
vectors (we can normalize them to have unit length).  An application of
the central limit theorem shows that with probability
at least $1/2$, $S_{n,V}$ belongs to  the ball  $B$ of radius $10 n^{1/2}
R$ centered at the origin.  Thus, there is a lattice point $x$ in this
ball such that
$$ \P( S_{n, V} =x) \ge \frac{1}{2} ({\rm volume } \,\, B ) ^{-1} \ge C
n^{-d/2 - d/(d-2) -o(1) } $$ for some positive  constant $C = C(d) $.

One can show that the supremum $\sum_{x } \P(S_{n,V} = x )$ is attained at
$x=0$, for any set
$V$ symmetric with respect to the origin. We use  Gauss' identity  $$ \I
_{Y =0} = C_d \int_{ S^{d-1}  } e ( Y \cdot t ) dt, $$  where $Y$ is a
vector in $\R^d$,
$\I$ is the indicator function,  $C_d$ is a positive constant depending on
$d$, $e (x) =\exp ( 2\pi i x ) $ and $S^{d-1} $ is the unit sphere in
$\R^d$.  By this identity, we have

$$\P (S_{n, V} =x) = \E \I_{S_{n, V} -x =0 } =  \E C_d \int_{S^{d-1} }  e
( (S_{n,V } -x ) \cdot t)  dt  = C_d \int_{S^{d-1}  } e (-x \cdot t) \E e
(S_{n, V } \cdot t ) dt . $$

As $S_{n, V} = \sum_{i=1}^n \eta_i v_i$ where the $\eta_i$ are
independent, it follows that

$$ \E e (S_{n, V } \cdot t ) =\prod_{i=1}^n \E e ( \eta_i v_i \cdot t ) =
\prod_{i=1}^n  \cos ( v_i \cdot t) . $$

Since the set $V$ is symmetric with respect to the origin, $$\prod_{i=1}^n
 \cos ( v_i \cdot t)  = \prod_{i=1}^n | \cos (v_i \cdot t ) | . $$  
Thus, by the triangle inequality

$$ \P( S_{n,V} =x ) \le   C_d \int_{S^{d-1} }  \prod_{i=1}^n | \cos (v_i
\cdot t ) |  dt = \P ( S_{n, V} =0 ) $$ for any $x \in \R^d$.

Let us now turn to the case $d=2$. Classical results in number theory show
that there are infinitely many
$R$ such that the circle centered at the origin of radius $R$ contains at
least $R^{1/ \log \log R} $  integral points \cite{2squares}.
 Let $V$ be the set of these  points,  it is easy to see that

$$\rho (V) = \Omega (  R^{-2 +o(1) }  )  \ge |V| ^{- C  \log \log |V| } $$
for a properly chosen constant $C$.

\subsection{Proof of Theorem \ref{main2} }  Assuming for a moment that $V$
consists of  different  vectors of unit length, the proof for an arbitrary
target $x$ is the same, since
in Theorem \ref{LO} we define $\rho (V) :=\sup_x \P( S_{n,V} =x )$.  For
the general case, by the pigeon hole principle,  there are at
least $n/LM$  different vectors in $V$ with
the same length
$t$. Let $V'$ be the set of these vectors and repeat the proof for this
set, conditioning on the rest of the walk. By the condition on $L, M$,
$|V'| = n^{1- o(1) }$ and this only influences the $o(1)$ terms in the
bounds.
 
\section{ New problems in incidence geometry} \label{incidence problem}

We conjecture that the upper bound $n^{-d/2 - d/(d-2) +o(1)}$ also holds
in the case $d=3$.  This would follows from the following conjectures,
which are of independent interest, 

\begin{conjecture}\label{triple sumset}
Let $V$ be a set of $n$ unit vectors in the euclidean space, with at most  $n^{o(1)}$  of its endpoints on
any plane. Then
$|V+V+V| \ge n^{5/2  -o(1) } $.
\end{conjecture}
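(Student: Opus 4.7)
My plan is to pass from $|V+V+V|$ to an incidence count in $\R^3$. The starting reduction is the Cauchy--Schwarz inequality
\[
|V+V+V| \ge \frac{n^6}{E_3(V)}, \qquad E_3(V) := \sum_u r(u)^2,
\]
where $r(u) := |\{(v_1,v_2,v_3) \in V^3 : v_1+v_2+v_3 = u\}|$; it therefore suffices to prove $E_3(V) \le n^{7/2+o(1)}$. The fundamental input, a 3D analog of Corollary~\ref{circle}, is the pair-sum bound $f(u) := |\{(v,v') \in V^2 : v+v' = u\}| \le n^{o(1)}$ for every $u \ne 0$, which is immediate from $|v|=|u-v|=1$ forcing $v$ onto the plane $\{x : x \cdot u = |u|^2/2\}$ together with the planar non-concentration hypothesis. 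A Cauchy--Schwarz on $f$ already gives $|V+V| \ge n^{2-o(1)}$, but the same pointwise bound plugged directly into $E_3$ only yields $E_3 \le n^{4+o(1)}$, i.e.\ the weaker conclusion $|V+V+V| \ge n^{2-o(1)}$.

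To save the missing factor of $n^{1/2}$, I would reformulate the problem as an incidence estimate. Observe that if $q = v_1 + v_2 + v_3$ with $v_i \in V$, then for any splitting of the triple into a single term and a pair-sum $w := v_i + v_j \in V+V$, the point $q$ lies on the unit sphere $S(w, 1)$, since the remaining summand has unit length. Take point set $P := V+V+V$ and sphere set $\mathcal{S} := \{S(w, 1) : w \in V+V\}$; the inclusion $w + V \subseteq P \cap S(w,1)$ gives the lower bound $I(P, \mathcal{S}) \ge |V+V| \cdot n \ge n^{3-o(1)}$ on the number of point-sphere incidences. A Szemer\'edi--Trotter-type upper bound of the shape
\[
I(P, \mathcal{S}) \le (|P|\,|\mathcal{S}|)^{2/3+o(1)} + |P| + |\mathcal{S}|
\]
would then, combined with $|\mathcal{S}| = |V+V| \ge n^{2-o(1)}$, rearrange to $|V+V+V| \ge n^{3/2 - o(1)} |V+V|^{1/2} \ge n^{5/2 - o(1)}$, as desired.

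The main obstacle is proving such an incidence bound. In general, $n$ points and $n$ unit spheres in $\R^3$ can have $\Omega(n^2)$ incidences, for example when many of the points lie on a common plane so that many spheres share a common circle. The planar non-concentration hypothesis is precisely what should rule out these degenerate configurations. The natural attack is the polynomial partitioning method of Guth--Katz: select a partitioning polynomial of degree $D \sim n^{1/3}$, bound incidences inside each cell by an elementary cellular argument, and handle the points of $P$ on the zero-set separately, exploiting that the non-concentration condition forces the zero-set to meet $V$ in a controlled fashion. No incidence inequality of the required strength is currently available in this setting, which is why the statement is posed as a conjecture and, as the authors note, is of independent interest.
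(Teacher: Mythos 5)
The statement you were asked to prove is itself a conjecture in the paper: the authors do not prove Conjecture~\ref{triple sumset}, they only observe that it would follow from a second conjecture (an incidence bound for points and unit spheres in $\R^3$, the paper's next displayed conjecture). So there is no ``paper's proof'' to match; the correct response is exactly what you do, namely reduce the sumset bound to a conjectural point--sphere incidence inequality and flag that inequality as the open ingredient.

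Your reduction is essentially the same as the paper's. Your observation that $f(u) = |\{(v,v') \in V^2 : v+v' = u\}| \le n^{o(1)}$ because both summands lie on the perpendicular-bisector plane of $u$ is exactly the paper's ``congruence of triangles'' remark, and it gives $|V+V| \ge n^{2-o(1)}$ in both treatments. Your choice $P := V+V+V$, $\mathcal{S} := \{S(w,1) : w \in V+V\}$ with the incidence lower bound $I(P,\mathcal{S}) \ge |V+V|\cdot n$ is precisely the configuration appearing in the paper's second conjecture (``$n^2$ unit spheres each containing $n$ points''). The Szemer\'edi--Trotter-type bound $I \lesssim (|P||\mathcal{S}|)^{2/3+o(1)} + |P| + |\mathcal{S}|$ you posit is one natural strengthening that would yield the paper's conjectured conclusion $|P|\ge n^{5/2-o(1)}$; the paper instead conjectures the output inequality directly, without committing to a specific incidence exponent, and even speculates the true exponent is $3$.

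Two small remarks. First, the opening Cauchy--Schwarz reduction via $E_3(V)$ is a detour you yourself abandon after noting it only gives $|V+V+V| \ge n^{2-o(1)}$; it could be dropped entirely without affecting the argument. Second, in the rearrangement step you should note that the linear terms $|P|+|\mathcal{S}|$ in the conjectured incidence bound cannot dominate in the relevant regime: $|\mathcal{S}| = |V+V| \le n^2 < n^{3-o(1)}$, and if $|P|$ dominates $n^{3-o(1)}$ then $|P| \ge n^{5/2}$ is already immediate, so the main term must dominate and the rearrangement you describe is legitimate. With that caveat filled in, your conditional derivation is correct, and your closing acknowledgement that no such incidence theorem is currently available is exactly the honest status of the problem --- the statement is a conjecture, and your proposal, like the paper's, gives a reduction rather than a proof.
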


\begin{conjecture}
Let $P$ be a set of $p$ points and $B$ be a set of $n^2$ unit spheres  in
$\R^3$. Again assume that no plane contains more than $n^{o(1)}$ points.
Assume as well that each sphere in $B$ contains $n$ points from $P$. Then
$p \ge n^{5/2-o(1)} $.
\end{conjecture}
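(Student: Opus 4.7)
\emph{Proof proposal.} Write $I$ for the number of point--sphere incidences, so the hypothesis gives $I \geq n \cdot n^2 = n^3$. The naive first step is to apply the standard bound for incidences of points and unit spheres in $\R^3$, $I = O((|P||B|)^{3/4} + |P| + |B|)$, which plugged in yields only $p \gtrsim n^2$. So the planar non-concentration hypothesis must be used in an essential way to recover the missing factor of $n^{1/2}$.

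The plan is to lift to $\R^4$ via $p \mapsto \tilde p = (p, |p|^2)$, under which each unit sphere $|p - c|^2 = 1$ becomes the affine hyperplane $\tilde p \cdot (-2c, 1) = 1 - |c|^2$. The lifted points $\tilde P$ lie on the paraboloid $\mathcal{P} = \{(x, |x|^2)\} \subset \R^4$, and the problem becomes: lower bound $|\tilde P|$ given that $n^2$ affine hyperplanes meet $\tilde P$ in $\geq n^3$ incidences. The planar non-concentration on $P$ translates neatly into: no hyperplane of $\R^4$ whose normal has vanishing last coordinate (i.e., "horizontal") contains more than $n^{o(1)}$ lifted points, cleanly separating the forbidden degeneracy from the hyperplane family we count against.

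Next I would apply polynomial partitioning adapted to $\mathcal{P}$: choose a polynomial $f$ of degree $D$ whose zero set splits $\mathcal{P} \setminus Z(f)$ into $O(D^3)$ cells (three-dimensional since $\dim \mathcal{P} = 3$), each containing $O(p/D^3)$ points. Each affine hyperplane either lies in $Z(f)$ (few, by degree reasons) or crosses $O(D)$ cells; applying the raw incidence bound cell-by-cell and balancing $D$ should, if the planar non-concentration can be exploited, upgrade the cell exponent from $3/4$ to the Szemer\'edi--Trotter-style $2/3$, which combined with $I \geq n^3$ yields the sharper bound $p \geq n^{5/2 - o(1)}$.

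The main obstacle is the analysis of incidences on $Z(f) \cap \mathcal{P}$: a surface of controlled degree in $\R^4$ which may carry many ``flat'' subvarieties (lifts of planes, ruled families, intersections with members of the hyperplane family, and so on). The planar non-concentration hypothesis is the only leverage available to forbid concentration of $\tilde P$ on such flats, and marshalling it into the partitioning argument is the crux. This is precisely the step at which the authors stop and pose the claim as a conjecture; in effect, resolving it amounts to establishing a Szemer\'edi--Trotter-type bound for points on a paraboloid versus affine hyperplanes in $\R^4$ under a mild non-concentration hypothesis, and appears to require a genuinely new algebraic input.
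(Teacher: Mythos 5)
This statement is an open conjecture in the paper; the authors offer no proof of it. What they do provide is a short argument that this conjecture implies Conjecture \ref{triple sumset} (by observing that pairs of unit vectors with a prescribed sum have endpoints on a common plane, so $|V+V|\gtrsim n^{2-o(1)}$, and then each element of $V+V+V$ lies on one of the corresponding $n^{2-o(1)}$ unit spheres), and that Conjecture \ref{triple sumset} in turn would yield the sharp $n^{-9/2+o(1)}$ bound in dimension $3$. So there is no proof in the paper for your sketch to be compared against.

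As a roadmap your proposal is sensible and the bookkeeping is correct: the lift $p \mapsto (p,|p|^2)$ does turn unit spheres into affine hyperplanes in $\R^4$, the planar non-concentration on $P$ does translate into non-concentration of the lifted points on ``horizontal'' hyperplanes, and the arithmetic checks out (an exponent of $2/3$ against $I\geq n^3$ gives $p\geq n^{5/2-o(1)}$, whereas the off-the-shelf $3/4$ exponent gives only $p\gtrsim n^2$). But the crucial step is exactly the one you flag and do not supply: you assert that polynomial partitioning together with the non-concentration hypothesis ``should'' upgrade the cell-level exponent from $3/4$ to $2/3$, without an argument for why non-concentration controls incidences on $Z(f)\cap\mathcal{P}$ and its flat subvarieties. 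That is not an incidental detail to be filled in later; it is the entire content of the conjecture. You are candid that this is where things stop, so the honest verdict is that you have produced a plausible plan of attack, not a proof, and the statement remains open exactly as it does in the paper.
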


As a matter of fact, we feel that one can replace both exponents $5/2$ by
$3$ (which would be clearly optimal).

Notice that the second statement implies the first. By congruence of triangles, the endopoints of all pairs of vectors with a prescribed sum lie in a same hyperplane. Under the hypothesis above, that means the size of $V+V$ is at least $n^{2-o(1)}$. Since each element of the triple sumset of V lies in a unit sphere centered at one of those $n^{2-o(1)}$ points, the conclusion follows.

It is also easy to see that conjecture \ref{triple sumset} implies the desired upper bound for the unproved cases in dimension $d=3$.
 We must show that if the generalized arithmetic progression containing all but a few elements of $V$ has rank $4$, or $5$, then its size is at least $n^{5/2-o(1)}$.

 Denote by $V'$ the set of $(1-\epsilon)n$ elements of $V$ contained in that GAP $Q$. The elements of $V'$ on any given hyperlane lie in the intersection of a circle with the projection of $Q$ onto the plane. This is a GAP of rank and size no greater than $Q$. By Corollary \ref{circle}, we conclude that $V'$ has at most $n^{o(1)}$ elements on any hyperplane. Furthermore we have that $V'+V'+V' \subseteq Q+Q+Q$.
 Assuming conjecture \ref{triple sumset}, this implies that $|Q+Q+Q| \ge n^{5/2  -o(1) } $ and so, since Q is a generalized arithmetic progression of constant rank,  its size itself is at least  $n^{5/2-o(1)}$.

\text{}

\end{document}